\DeclareMathOperator*{\osc}{osc}
\numberwithin{equation}{section}
\newtheorem{theorem}{Theorem}[section]
\newtheorem{lemma}{Lemma}[section]
\newtheorem{remark}{Remark}[section]
\def\XXint#1#2#3{{\setbox0=\hbox{$#1{#2#3}{\int}$}
     \vcenter{\hbox{$#2#3$}}\kern-.5\wd0}}
\begin{document}

\title{A note on the weak Harnack inequality for unbounded  minimizers of elliptic functionals with generalized Orlicz growth
}

\author{Mariia O. Savchenko, Igor I. Skrypnik, Yevgeniia A. Yevgenieva
 }

  \maketitle

  \begin{abstract} We prove the weak Harnack inequality for the functions $u$ which belong to the corresponding De Giorgi classes $DG^{-}(\Omega)$ under the additional assumption that $u\in L^{s}_{loc}(\Omega)$ with some  $s> 0$.  In particular, our result covers new cases of  functionals with a variable exponent or double-phase functionals under the non-logarithmic condition.

\textbf{Keywords:}
non-autonomous functionals, unbounded minimizers, weak Harnack inequality.

\textbf{MSC (2010)}: 35B40, 35B45, 35B65.

%\textbf{Declarations of interest}: none.

\end{abstract}

\pagestyle{myheadings} \thispagestyle{plain}
\markboth{Mariia O. Savchenko, Igor I. Skrypnik, Yevgeniia A.Yevgenieva}
{A note on the weak Harnack inequality....}

\section{Introduction and Main Results}\label{Introduction}
It is known that for integrands with $p,q$-growth, it is crucial that the gap between $p$ and $q$ is not too large. Otherwise, in the case
$q> \dfrac{np}{n-p}$, $p<n$ there exist unbounded minimizers and at the same time, the constant in Harnack inequality cannot be independent of the function, in contrast to the standard case, i.e. if $p=q$ (we refer the reader to \cite{Alk, AlkSur, BarColMin1, BarColMin2, BarColMin3, BenHarHasKar, BurSkr, ColMin1, ColMin2, ColMin3, CupMarMas, HadSkrVoi, HarKinLuk, HarHasLee, HarHasToi, Lie, LisSkr, MizOhnShi, Ok, RagTac, SavSkrYev, SkrVoi1, SkrVoi2, Sur} for results, references, historical notes and extensive survey of regularity issues). It was Ok \cite{Ok}, who proved the boundedness of minimizers of elliptic functionals of double-phase type in the case $q>\dfrac{np}{n-p}$ under some additional assumption. More precisely, under the condition $\osc\limits_{B_{r}(x_{0})}a(x)\leqslant A r^{a}$, $a\geqslant q-p$ the function $u$ belonging to the corresponding De Giorgi class $DG^{+}(\Omega)$ is bounded by a constant depending on $||u||_{L^{s}}$ with $s \geqslant \dfrac{(q-p) n}{a+p-q}$. This condition, for example, gives a possibility to improve the regularity results \cite{BarColMin1, BarColMin2, BarColMin3, ColMin1, ColMin2} for unbounded minimizers with constant depending on $||u||_{L^{s}}$. The weak Harnack inequality for unbounded supersolutions of the corresponding elliptic equations with generalized Orlicz growth  under the so-called logarithmic conditions was proved by the Moser method  in \cite{BenHarHasKar}.

It seems that for the corresponding De Giorgi classes $DG^{-}(\Omega)$ this question remains open even under the so-called logarithmic conditions, i.e. if $\lambda(r) \equiv 1$ (see condition ($\varPhi_{\lambda}$) below). In this note, we will prove the weak Harnack inequality for functions belonging to the corresponding  elliptic De Giorgi classes $DG^{-}(\Omega).$

We write  $W^{1,\varPhi(\cdot)}(\Omega)$ for the class of functions $u\in W^{1,1}(\Omega)$ with
$\int\limits_{\Omega}\varPhi(x, |\nabla u|)dx < \infty$ and we say that a measurable function $u:\Omega\rightarrow \mathbb{R}$ belongs to the elliptic class $DG^{\pm}_{\varPhi}(\Omega)$ if $u\in W^{1,\varPhi(\cdot)}(\Omega)$ and there exist numbers  $c >0$, $q>1$ such that for any ball $B_{8r}(x_{0})\subset\Omega$, any $k \in \mathbb{R}$  and any $ \sigma\in(0,1)$  the following inequalities hold:

\begin{equation}\label{eq1.1}
\int\limits_{A^{\pm}_{k,r(1-\sigma)}}  \varPhi\big(x,|\nabla u|\big) dx \leqslant  \frac{c}{\sigma^{q}}\,\int\limits_{A^{\pm}_{k,r}}  \varPhi\bigg(x,\frac{(u-k)_{\pm}}{r}\bigg) dx,
\end{equation}
here  $(u-k)_{\pm}:=\max\{\pm(u-k), 0\}$, $A^{\pm}_{k,r}:=B_{r}(x_{0})\cap \{(u-k)_{\pm}>0\}$.

Further, we suppose that $\varPhi(x, v):\Omega\times \mathbb{R}_{+}\rightarrow \mathbb{R}_{+}$ is a non-negative function satisfying the following properties: for any $x\in\Omega$
 the function $ v\rightarrow \varPhi(x, v)$ is increasing and
 $\lim\limits_{ v\rightarrow0}\varPhi(x, v)=0$,
 $\lim\limits_{ v\rightarrow +\infty}\varPhi(x, v)=+\infty$. We also assume that

\begin{itemize}
\item[($\varPhi$)]
There exist $1<p<q$ such that
for $x\in \Omega$ and for $ w\geqslant v > 0$ there holds
\begin{equation*}
\left( \frac{w}{v} \right)^{p} \leqslant\frac{\varPhi(x, w)}{\varPhi(x, v)}\leqslant
 \left( \frac{w}{v} \right)^{q}.
\end{equation*}
\end{itemize}

\begin{itemize}
\item[($\varPhi_{\lambda}$)]
There exist $s >0$, $R >0$ and  continuous, non-decreasing function
$\lambda(r) \in(0,1)$ on the interval $(0,R)$, $\lim\limits_{r \rightarrow 0} \lambda(r)=0$, $\lim\limits_{r \rightarrow 0} \dfrac{r}{\lambda(r)}=0$,   such that for any $B_{r}(x_{0}) \subset B_{R}(x_{0}) \subset \Omega$ and some
$A >0 $ there holds
$$
\varPhi^{+}_{B_{r}(x_{0})}\bigg( \frac{\lambda(r) v}{r^{1+\frac{n}{s}}}\bigg)\leqslant A\,\, \varPhi^{-}_{B_{r}(x_{0})}\bigg(\frac{\lambda(r) v}{r^{1+\frac{n}{s}}}\bigg),\quad r^{1+\frac{n}{s}} \leqslant \lambda(r) v\leqslant 1,
$$
here $\varPhi^{+}_{B_{r}(x_{0})}( v):=\sup\limits_{x\in B_{r}(x_{0})}\varPhi(x, v),\quad  \varPhi^{-}_{B_{r}(x_{0})}( v):=\inf\limits_{x\in B_{r}(x_{0})}\varPhi(x, v),\quad v > 0 $.
\end{itemize}
For the function $\lambda(r)$ we also need the following condition
\begin{itemize}
\item[($\lambda$)]
For any $0< r < \rho< R$ there holds
\begin{equation*}
\lambda(r) \geqslant \lambda(\rho) \bigg(\frac{r}{\rho}\bigg)^{b},
\end{equation*}
with some $b \geqslant 0$.
\end{itemize}
For the function $\lambda(r)=\left[\log\dfrac{1}{r}\right]^{-\frac{\beta}{q-p}}$, $\beta \geqslant 0$ this condition holds evidently, provided that $R$ is small enough.
\begin{remark}\label{rem1.1}
Consider the function $\varPhi(x, v):= v^{p}+a(x)v^{q}$, $a(x) \geqslant 0$, $\osc\limits_{B_{r}(x_{0})}a(x) \leqslant K r^{a} \big[\log\frac{1}{r}\big]^{\beta}$, $a\in (0,1]$, $\beta \geqslant 0$, $K>0$. Evidently condition ($\varPhi_{\lambda}$) holds with $\dfrac{n(q-p)}{a+p-q} \leqslant s \leqslant \infty$, $a\geqslant q-p$, $\lambda(r):= \big[\log\frac{1}{r}\big]^{-\frac{\beta}{q-p}}$ and $A=K^{q-p}$.

For the function $\varPhi(x, v):=v^{p(x)}$, $\osc\limits_{B_{r}(x_{0})}p(x)\leqslant\dfrac{L}{\log\frac{1}{r}}$, $L>0$ condition ($\varPhi_{\lambda}$) holds with $s>0$, $\lambda(r)\equiv 1$ and $A=\exp\big(L(1+\frac{n}{s})\big)$.
\end{remark}
\begin{remark}\label{rem1.2}
We note that conditions ($\varPhi_{\lambda}$) and ($A1-s_{*}$) with $s_{*}=\dfrac{ns}{n+s}$ from \cite{BenHarHasKar} essentially coincide in the case $\lambda(r)\equiv 1$.
\end{remark}

We refer to the parameters $n$, $p$, $q$, $s$,  $c$, $A$   as our structural  data, and we write $\gamma$ if it can be quantitatively determined a priory in terms of the above quantities. The generic constant $\gamma$ may change from line to line.

Our main result reads as follows.
\begin{theorem}\label{th1.1}
Let $u\in DG^{-}(\Omega)$, $u\geqslant 0$, let conditions ($\varPhi$), ($\varPhi_{\lambda}$), ($\lambda$) be fulfilled. Let $B_{8\rho}(x_{0})\subset B_{R}(x_{0})\subset \Omega$, let additionally
$u \in L^{s}_{loc}(\Omega)$ with some $s\geqslant q-p$ and $\left(\int\limits_{B_{2\rho}(x_{0})} u^{s} \right)^{\frac{1}{s}} \leqslant d$. Then there exists a positive constant $C$ depending only
on the known parameters and $d$, such that
\begin{equation}\label{eq1.2}
\left(\,\,\fint\limits_{B_{\rho}(x_{0})} (u+\rho )^{\theta} dx\right)^{\frac{1}{\theta}}\leqslant \frac{C}{\lambda(\rho)} \left(\inf\limits_{B_{\frac{\rho}{2}}(x_{0})} u +\rho \right),
\end{equation}
where $\fint\limits_{B_{\rho}(x_{0})}u^{\theta} dx:= |B_{\rho}(x_{0})|^{-1} \int\limits_{B_{\rho}(x_{0})}u^{\theta} dx$ and $\theta >0$ is some fixed number  depending only on the known data.
\end{theorem}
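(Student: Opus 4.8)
\noindent\emph{Strategy.} The plan is to run the De Giorgi method for the class $DG^{-}(\Omega)$ in the $\varPhi(x,\cdot)$--setting and then to integrate over the levels the resulting decay of the super-level sets of $u$. Throughout $x_{0}$ is fixed, $B_{r}:=B_{r}(x_{0})$, we work with $u+\rho$, and we set $\mu:=\inf_{B_{\rho/2}}u+\rho\ge\rho$. The skeleton is: a \emph{De Giorgi lower bound} (measure smallness of the set where $u$ lies below a level forces a pointwise lower bound on $u$), a \emph{measure--shrinking lemma} from the De Giorgi isoperimetric inequality, their combination into a measure--to--pointwise estimate, and finally --- via a covering/iteration in the scales --- a power--type decay of the super-level sets. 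Condition $(\varPhi)$ gives the integrability gain in the Sobolev step; $(\varPhi_{\lambda})$ together with $\big(\int_{B_{2\rho}}u^{s}\big)^{1/s}\le d$ lets one replace $\varPhi(x,\cdot)$ by $\varPhi^{+}_{B_{r}}(\cdot)$ and $\varPhi^{-}_{B_{r}}(\cdot)$ uniformly in $x$, at the price of a factor $\lambda(r)$ (which is the origin of the $\lambda(\rho)^{-1}$ in \eqref{eq1.2}, and also where the hypothesis $s\ge q-p$ is used); $(\lambda)$ serves only to compare $\lambda$ at different scales. For $\lambda\equiv1$ the factor $\lambda(\rho)^{-1}$ disappears and one recovers the classical weak Harnack inequality.

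\noindent\emph{Step 1 (De Giorgi lower bound).} I would first show: there is $\nu\in(0,1)$, depending only on the data, such that for every $B_{2r}\subset B_{2\rho}$ and every level $k$ in the range where $(\varPhi_{\lambda})$ applies at scale $r$,
\[
|B_{r}\cap\{u<k\}|\le\nu\,|B_{r}|\ \Longrightarrow\ u\ge c_{0}\,\lambda(r)\,k\quad\text{a.e. in }B_{r/2},
\]
with $c_{0}$ depending only on the data and $d$. This is the standard De Giorgi iteration on $k_{j}=\tfrac{k}{2}(1+2^{-j})$, $r_{j}=\tfrac{r}{2}(1+2^{-j})$: apply the ``$-$'' case of \eqref{eq1.1} with $k=k_{j}$, $r=r_{j}$ and $\sigma_{j}$ chosen so $r_{j}(1-\sigma_{j})=r_{j+1}$ to get a Caccioppoli bound for $(u-k_{j})_{-}$; insert it into the Sobolev--Poincar\'e inequality of the generalized Orlicz space, using $(\varPhi)$ to pick up an exponent $\chi>1$ and $(\varPhi_{\lambda})$, $(\lambda)$ to pass between $\varPhi(x,\cdot)$ and $\varPhi^{+}_{B_{r_{j}}}(\cdot)$, $\varPhi^{-}_{B_{r_{j}}}(\cdot)$ (here $\|u\|_{L^{s}(B_{2\rho})}\le d$ is used to dominate the non-standard terms and to keep the arguments admissible); this gives a recursion $Y_{j+1}\le\gamma\,b^{\,j}\,Y_{j}^{1+\varepsilon}$ for $Y_{j}:=|B_{r_{j}}\cap\{u<k_{j}\}|/|B_{r_{j}}|$ with $b>1$, $\varepsilon>0$ from the data, so $Y_{j}\to0$ whenever $Y_{0}\le\nu$.

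\noindent\emph{Step 2 (Measure to pointwise bound).} Next I would combine Step 1 with the De Giorgi isoperimetric inequality, valid for $0<h<H$,
\[
(H-h)\,|B_{r}\cap\{u<h\}|\le\frac{\gamma\, r^{n+1}}{|B_{r}\cap\{u\ge H\}|}\int_{B_{r}\cap\{h\le u<H\}}|\nabla u|\,dx,
\]
in which the gradient integral is estimated by H\"older's inequality, the Caccioppoli bound from \eqref{eq1.1}, and $(\varPhi)$, $(\varPhi_{\lambda})$. Iterating it with $h=H/2^{i+1}$ and big level $H/2^{i}$, $i=0,1,\dots$, one sees that if $|B_{r}\cap\{u\ge H\}|\ge\tau|B_{r}|$ for a suitable fixed $\tau$, then after a controlled number of steps a level $k=H/2^{i}$ is reached with $|B_{r}\cap\{u<k\}|\le\nu|B_{r}|$; Step 1 then applies. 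The outcome is: there are $\tau,\eta_{0}\in(0,1)$, depending on the data and $d$, such that
\[
|B_{r}\cap\{u\ge H\}|\ge\tau\,|B_{r}|\ \Longrightarrow\ u\ge\eta_{0}\,\lambda(r)\,H\quad\text{a.e. in }B_{r/2}
\]
for every $B_{2r}\subset B_{2\rho}$ and every admissible $H$.

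\noindent\emph{Step 3 (Decay of super-level sets, conclusion, and the main obstacle).} From Step 2, a standard measure--theoretic (Krylov--Safonov / De Giorgi type) covering and iteration over the scales $r\le\rho$ --- using $(\lambda)$ to control $\lambda(r)$ against $\lambda(\rho)$ --- yields $\bar\theta\in(0,1)$ depending only on the data and $\gamma$ depending also on $d$ with
\[
|B_{\rho}\cap\{u+\rho\ge H\}|\le\gamma\Bigl(\tfrac{\mu}{\lambda(\rho)\,H}\Bigr)^{\bar\theta}|B_{\rho}|
\]
for $H\ge\mu/\lambda(\rho)$ within the admissible range, while for larger $H$ the a priori bound gives $|B_{\rho}\cap\{u+\rho\ge H\}|\le\gamma d^{s}H^{-s}$; the requirement $s\ge q-p$ is precisely what makes the two bounds match at the endpoint of the admissible range. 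Then, fixing $\theta:=\tfrac12\min\{\bar\theta,s\}$ (depending only on the data) and using the layer--cake formula
\[
\fint\limits_{B_{\rho}}(u+\rho)^{\theta}\,dx=\theta\int_{0}^{\infty}H^{\theta-1}\,\frac{|B_{\rho}\cap\{u+\rho>H\}|}{|B_{\rho}|}\,dH ,
\]
I split the integral at $H=\mu/\lambda(\rho)$ (below which the measure ratio is $\le1$) and bound the rest by the two displayed estimates; since $\theta<\bar\theta$ and $\theta<s$ each piece integrates to $\le\gamma(\mu/\lambda(\rho))^{\theta}$, and \eqref{eq1.2} follows. The main obstacle is Step 1 (and the gradient bound in Step 2): the De Giorgi iteration in the $\varPhi(x,\cdot)$--framework must repeatedly pass between $\varPhi$, $\varPhi^{+}_{B_{r}}$ and $\varPhi^{-}_{B_{r}}$ without leaving the range where $(\varPhi_{\lambda})$ is effective, control $\lambda$ across dyadic scales via $(\lambda)$, and absorb the unbounded contributions of $u$ through $\big(\int_{B_{2\rho}}u^{s}\big)^{1/s}\le d$ --- all while keeping the final constant dependent only on the data and $d$.
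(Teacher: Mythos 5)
Your Steps 1--2 are, in substance, the paper's Lemmas \ref{lem2.2}--\ref{lem2.3}: the paper uses condition ($\varPhi_{\lambda}$) to reduce the Orlicz energy inequality \eqref{eq1.1} to a standard $p$-energy estimate for levels in the admissible window $r\leqslant\lambda(r)k\leqslant r^{-n/s}$, and then quotes the classical Ladyzhenskaya--Ural'tseva lemmas for the expansion of positivity; your plan to rerun the De Giorgi iteration directly with an Orlicz Sobolev--Poincar\'e inequality is heavier but viable. The genuine gap is Step 3. You dispatch the passage from ``measure density implies pointwise positivity'' to the power decay of super-level sets by invoking ``a standard Krylov--Safonov / De Giorgi type covering over the scales'' --- but this is precisely the step the authors single out as the main difficulty and deliberately avoid. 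A covering argument would have to apply the expansion lemma on arbitrary sub-balls $B_{r}(\bar{x})$ at arbitrary levels, and on each such ball the level must be kept inside the window $r\leqslant\lambda(r)k\leqslant r^{-n/s}$ required by ($\varPhi_{\lambda}$); the only global information is $\big(\int_{B_{2\rho}}u^{s}\big)^{1/s}\leqslant d$, which does not control levels and measure fractions ball by ball, and without $\sup u$ (available only in the $s=\infty$ case treated in \cite{BarColMin1,SavSkrYev}) the levels cannot be normalized. Moreover, as you state it, Step 2 concludes positivity only on $B_{r/2}$, a ball \emph{smaller} than the one carrying the measure information, so it cannot propagate anything through a covering; one needs the conclusion on an enlarged ball, as in Lemma \ref{lem2.3}. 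Finally, your claim that $s\geqslant q-p$ ``makes the two bounds match at the endpoint of the admissible range'' is not justified and is not how the hypothesis enters the paper's argument.

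What the paper does instead, and what is missing from your outline, is Lemma \ref{lem3.1}: work at the single scale $\rho$ and cut the level by the factor $\varepsilon=(1+d)^{-1}4^{-n/s}\alpha^{1/s}$ --- this is exactly where the $L^{s}$ bound $d$ is used, since it makes $\lambda(\rho)\varepsilon N$ admissible (inequality \eqref{eq3.7}); the resulting $W^{1,p}$ (hence $W^{1,1}$) bound \eqref{eq3.8} feeds the DiBenedetto--Gianazza--Vespri local clustering Lemma \ref{lem2.1}, which produces one ball $B_{\bar{r}}(\bar{x})$ with $\bar{r}=\delta_{0}\alpha^{2}\rho$ and measure density $3/4$; then the expansion of positivity is iterated along the concentric balls $B_{2^{j}\bar{r}}(\bar{x})$ up to scale $\rho$, with condition ($\lambda$) comparing $\lambda(\bar{r})$ to $\lambda(\rho)$. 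This yields the decay $\alpha\leqslant\big(C_{2}\bar{m}(\rho)/N\big)^{1/\tau}$ with the explicit exponent $\tau=2\log(1/\eta_{0})+2b+1/s$, valid for all levels $N$ at once, after which your layer-cake computation (which does coincide with the paper's final step) closes the proof. Without the clustering lemma and the $\alpha^{1/s}$-reduction of the level, your Step 3 does not go through as written.
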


The conditions of Theorem~\ref{th1.1} are precise, we refer the reader to \cite{BenHarHasKar} for the examples.
In the case  $s=\infty$, Theorem \ref{th1.1}  was proved in \cite{BarColMin1, SavSkrYev}.

The main difficulty arising in the proof of our main result, Theorem \ref{th1.1}, is related to the so-called theorem on the expansion of positivity. Roughly speaking, having information on the measure of the ''positivity set'' of $u$ over the ball $B_{r}(\bar{x})$:
$$|\{x \in B_{r}(\bar{x}) : u(x) \geqslant N \}| \geqslant \alpha|B_{r}(\bar{x})|,$$
with some $r, N >0$ and  $\alpha \in (0, 1)$, we need to  translate it into the expansion of the set of positivity  to a ball $B_{2r}(\bar{x})$. Difficulties arise not only due to the presence of a factor $\lambda(r) $ in condition ($\varPhi_{\lambda}$), but also due to the presence of  the second term on the right-hand side of inequality \eqref{eq1.1}.  We do not use the classical covering argument of  Krylov and Safonov \cite{KrySaf}, DiBenedetto and  Trudinger \cite{DiBTru} as it was done in the ''bounded'' case, i.e. if $s= + \infty$ (see e.g. \cite{BarColMin1}), instead we use the  local clustering lemma due to DiBenedetto, Gianazza, and Vespri \cite{DiBGiaVes} and moreover, instead of $\sup\limits_{B_{2r}(\bar{x})}u$ we are forced to use averages of $u$ over the ball $B_{2r}(\bar{x})$.

The rest of the paper contains proof of the above theorem. In Section \ref{Sec2} we collect some auxiliary propositions and required integral estimates of functions belonging to the corresponding De Giorgi classes. Section \ref{Sec3} contains the proof of weak Harnack inequality, Theorem~\ref{th1.1}.
%%%%%%%%%%%%%%%%%%%%%%%%%%%%%%%%%%%%%%%%%%%%%%%%%%%%%%%%%%%%%%%%%%%%%%%%%%%%%%%%%%%%%%%%%%%%%%%%%%%%%%%%%%%%%%%
%%%%%%%%%%%%%%%%%%%%%%%%%%%%%%%%%%%%%%%%%%%%%%%%%%%%%%%%%%%%%%%%%%%%%%%%%%%%%%%%%%%%%%%%%%%%%%%%%%%%%%%%%%%%%%%
%%%%%%%%%%%%%%%%%%%%%%%%%%%%%%%%%%%%%%%%%%%%%%%%%%%%%%%%%%%%%%%%%%%%%%%%%%%%%%%%%%%%%%%%%%%%%%%%%%%%%%%%%%%%%%%%%%

\section{Auxiliary Material and Integral Estimates}\label{Sec2}

\subsection{ Local Clustering Lemma}\label{subsect2.1}

The following  lemma will be used in the sequel, it is  the local clustering lemma, see \cite{DiBGiaVes}.

\begin{lemma}\label{lem2.1}
Let $K_{r}(y)$ be a cube in $\mathbb{R}^{n}$ of edge $r$ centered at $y$ and let $u\in W^{1,1}(K_{r}(y))$ satisfy
\begin{equation}\label{eq2.1}
||(u-k)_{-}||_{W^{1,1}(K_{r}(y))} \leqslant \mathcal{K}\,k\,r^{n-1},\,\,\,\,\,\,and\,\,\,\,\,\, |\{K_{r}(y) : u\geqslant k \}|\geqslant \alpha |K_{r}(y)|,
\end{equation}
with some $\alpha\in (0,1)$, $k\in\mathbb{R}^{1}$ and $\mathcal{K} >0$. Then for any $\xi \in (0,1)$ and any $\nu\in (0,1)$ there exists $\bar{x} \in K_{r}(y)$ and $\delta=\delta(n) \in (0,1)$ such that
\begin{equation}\label{eq2.2}
|\{K_{\bar{r}}(\bar{x}):  u\geqslant \xi\,k \}| \geqslant (1-\nu) |K_{\bar{r}}(y)|,\,\,\, \bar{r}:=\delta \alpha^{2}\frac{(1-\xi)\nu}{\mathcal{K}}\,r.
\end{equation}
\end{lemma}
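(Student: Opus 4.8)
The plan is to argue by contradiction over a regular grid of subcubes, the only analytic input being the classical De~Giorgi isoperimetric inequality: for $v\in W^{1,1}(Q)$ on a cube $Q$ of edge $\rho$ and reals $l<m$,
$$
(m-l)\,\big|\{x\in Q: v(x)>m\}\big|\,\big|\{x\in Q: v(x)<l\}\big|\leqslant C(n)\,\rho^{\,n+1}\!\!\int\limits_{\{x\in Q:\,l<v<m\}}\!\!|\nabla v|\,dx ,
$$
which I would recall as known (it follows by writing $v(x)-v(z)$ as a line integral of $\nabla v$ along $[x,z]$, averaging $x$ over $\{v>m\}$ and $z$ over $\{v<l\}$, and a Fubini estimate of the resulting Riesz potential). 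It will be applied with $v=u$, $l=\xi k$, $m=k$, so $m-l=(1-\xi)k$.

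First I would fix $\bar r$ as in \eqref{eq2.2} and cover $K_{r}(y)$ by a maximal grid of pairwise disjoint subcubes $Q_{1},\dots,Q_{M}$ of edge $\bar r$, setting $U=\bigcup_{i}Q_{i}$. Since $\bar r$ is a small multiple of $r$, the uncovered part has measure $\leqslant r^{n}-(r-\bar r)^{n}\leqslant n\bar r\,r^{n-1}\leqslant\tfrac12\alpha r^{n}$ once $\delta=\delta(n)$ is small; hence by the second relation in \eqref{eq2.1},
$$
\sum_{i=1}^{M}\big|\{x\in Q_{i}: u\geqslant k\}\big|=\big|\{x\in U: u\geqslant k\}\big|\geqslant\alpha r^{n}-\tfrac12\alpha r^{n}=\tfrac12\alpha r^{n}.
$$
Then I would assume, towards a contradiction, that $|\{x\in Q_{i}: u\geqslant\xi k\}|<(1-\nu)|Q_{i}|$, i.e. $|\{x\in Q_{i}: u<\xi k\}|\geqslant\nu\bar r^{\,n}$, for \emph{every} $i$.

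Applying the De~Giorgi inequality on each $Q_{i}$ with the levels $\xi k<k$ and dividing by $|\{x\in Q_{i}: u<\xi k\}|\geqslant\nu\bar r^{\,n}$,
$$
(1-\xi)k\,\big|\{x\in Q_{i}: u>k\}\big|\leqslant\frac{C(n)\,\bar r^{\,n+1}}{\nu\bar r^{\,n}}\!\!\int\limits_{\{x\in Q_{i}:\,\xi k<u<k\}}\!\!|\nabla u|\,dx=\frac{C(n)\bar r}{\nu}\!\!\int\limits_{\{x\in Q_{i}:\,\xi k<u<k\}}\!\!|\nabla u|\,dx .
$$
Since $|\nabla u|=|\nabla(u-k)_{-}|$ on $\{\xi k<u<k\}$, summing over the disjoint $Q_{i}$ and combining with the previous display and the first relation in \eqref{eq2.1} yields
$$
\tfrac12(1-\xi)k\,\alpha r^{n}\leqslant(1-\xi)k\sum_{i}\big|\{x\in Q_{i}: u>k\}\big|\leqslant\frac{C(n)\bar r}{\nu}\!\!\int\limits_{K_{r}(y)}\!\!|\nabla(u-k)_{-}|\,dx\leqslant\frac{C(n)\bar r}{\nu}\,\mathcal K\,k\,r^{n-1}.
$$
Cancelling $k$ and $r^{n-1}$ gives $\bar r\geqslant c(n)\,\alpha\nu(1-\xi)\mathcal K^{-1}r$, which contradicts $\bar r=\delta\alpha^{2}(1-\xi)\nu\mathcal K^{-1}r$ as soon as $\delta\alpha^{2}<c(n)\alpha$ — valid for every $\alpha\in(0,1)$ once $\delta\leqslant c(n)$, the spare power of $\alpha$ (we carry $\alpha^{2}$ while only $\alpha$ enters the last display) being exactly what compensates the measure lost to the boundary. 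Hence the contradiction hypothesis fails for some $Q_{i_{0}}$, and $\bar x:=$ the center of $Q_{i_{0}}$ (so that $K_{\bar r}(\bar x)=Q_{i_{0}}\subset K_{r}(y)$) satisfies \eqref{eq2.2}.

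I expect the only genuine obstacle to be the bookkeeping: reproducing the De~Giorgi isoperimetric inequality in the correct homogeneous form (power $\rho^{\,n+1}$, raw measure $|\{v<l\}|$ in the denominator), and handling $\partial K_{r}(y)$ where $\bar r$ need not divide $r$, which forces the maximal-packing estimate above and the smallness of $\delta$. A harmless preliminary reduction — splitting off the degenerate case where $\mathcal K$ lies below a dimensional threshold, in which $(u-k)_{-}$ is so small in $L^{1}$ that a Chebyshev bound on $|\{u<\xi k\}|$ plus pigeonholing over the grid produces a good cube directly — disposes of the remaining corner case; otherwise the argument is precisely the three steps above: De~Giorgi on each grid cube, summation exploiting $|\nabla u|=|\nabla(u-k)_{-}|$ on $\{\xi k<u<k\}$, and the contradiction extracted from the $W^{1,1}$-bound in \eqref{eq2.1}.
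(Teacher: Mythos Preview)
The paper does not prove this lemma at all; it is quoted verbatim from DiBenedetto--Gianazza--Vespri \cite{DiBGiaVes} and used as a black box in Section~\ref{Sec3}. So there is no ``paper's own proof'' to compare against.

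That said, your argument is exactly the original DiBenedetto--Gianazza--Vespri proof: partition $K_{r}(y)$ into a grid of congruent subcubes of edge $\bar r$, apply the De~Giorgi isoperimetric inequality on each cell with levels $\xi k<k$, sum using disjointness and the identity $|\nabla u|=|\nabla(u-k)_{-}|$ on $\{\xi k<u<k\}$, and contradict the gradient bound in \eqref{eq2.1}. The computation is correct and yields the contradiction $\delta\alpha\geqslant c(n)$, which fails once $\delta\leqslant c(n)$, independently of $\alpha\in(0,1)$; this is indeed why $\bar r$ carries $\alpha^{2}$ rather than $\alpha$.

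One small caveat on your ``harmless preliminary reduction'': the obstruction when $\mathcal{K}$ is below a dimensional threshold is not merely that the boundary layer $K_{r}\setminus U$ is too large, but that $\bar r$ itself may exceed $r$, so the grid is empty and ``pigeonholing over the grid'' has nothing to pigeonhole over. In that regime the Chebyshev bound $|\{u<\xi k\}|\leqslant\mathcal{K}r^{n-1}/(1-\xi)$ does give the conclusion directly for $K_{r}(y)$ itself (take $\bar x=y$), but the statement as written fixes $\bar r$ explicitly and does not allow you to shrink it. This is a well-known looseness in the formulation of the clustering lemma rather than a flaw in your reasoning; in the paper's only application (proof of Lemma~\ref{lem3.1}) one has $\mathcal{K}=\gamma$, a fixed structural constant, so the degenerate case never occurs.
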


\subsection{Local Energy Estimates}\label{subsec2.2}

The following lemma is a consequence of inequalities \eqref{eq1.1}.
\begin{lemma}\label{lem2.2}
Let $u\in DG^{-}(\Omega$), $u\geqslant 0$,  $B_{r}(\bar{x})\subset B_{\rho}(x_{0}) \subset B_{8\rho}(x_{0})  \subset \Omega$, and let condition ($\varPhi$) holds, then for  any $k> 0$, any $\sigma \in(0,1)$ there holds
\begin{equation}\label{eq2.3}
\int\limits_{B_{r(1-\sigma)}(\bar{x})} | \nabla (u-\lambda(r) k)_{-}|^{p} \, dx \leqslant \gamma \sigma^{-q}
\frac{\varPhi^{+}_{B_{r}(\bar{x})}\big( \frac{\lambda(r) k}{r}\big)}{\varPhi^{-}_{B_{r}(\bar{x})}\big( \frac{\lambda(r) k}{r}\big)} \bigg(\frac{\lambda(r)k}{r}\bigg)^{p} |A^{-}_{\lambda(r)k,r}|.
\end{equation}

If additionally condition ($\varPhi_\lambda$) holds and
\begin{equation}\label{eq2.4}
r \leqslant \lambda(r)  k \leqslant \frac{1}{r^{\frac{n}{s}}},
\end{equation}
then
\begin{equation}\label{eq2.5}
\int\limits_{B_{r(1-\sigma)}(\bar{x})} | \nabla (u-\lambda(r) k)_{-}|^{p} \, dx \leqslant \gamma \sigma^{-q}
 \bigg(\frac{\lambda(r)k}{r}\bigg)^{p} |A^{-}_{\lambda(r)k,r}|.
\end{equation}
\end{lemma}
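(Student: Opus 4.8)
The plan is to derive both estimates directly from the defining inequality \eqref{eq1.1} of $DG^{-}(\Omega)$ by applying condition ($\varPhi$) to bound $\varPhi(x,\cdot)$ above and below by the pure power functions $\varPhi^{\pm}_{B_{r}(\bar x)}$, and then by controlling the resulting ratios of suprema to infima using ($\varPhi_{\lambda}$). First I would fix $k>0$ and apply \eqref{eq1.1} with the choice of level $\lambda(r)k$ in place of $k$ and the ball $B_{r}(\bar x)$ (note $B_{8r}(\bar x)\subset B_{8\rho}(x_0)\subset\Omega$ is available since $B_r(\bar x)\subset B_\rho(x_0)$), obtaining
\begin{equation*}
\int\limits_{A^{-}_{\lambda(r)k,\,r(1-\sigma)}} \varPhi\big(x,|\nabla u|\big)\,dx \;\leqslant\; \frac{c}{\sigma^{q}}\int\limits_{A^{-}_{\lambda(r)k,\,r}} \varPhi\bigg(x,\frac{(u-\lambda(r)k)_{-}}{r}\bigg)\,dx.
\end{equation*}

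Next I would estimate the two sides separately. On the left, since on $A^{-}_{\lambda(r)k,r(1-\sigma)}$ we have $|\nabla u| = |\nabla (u-\lambda(r)k)_{-}|$, and using the lower bound in ($\varPhi$) — comparing $\varPhi(x,|\nabla u|)$ to $\varPhi(x,\lambda(r)k/r)$ when $|\nabla u|\ge \lambda(r)k/r$ and keeping in mind we only want an $L^p$ gradient bound — one writes $\varPhi(x,|\nabla u|) \ge \varPhi^{-}_{B_r(\bar x)}(\lambda(r)k/r)\,(r|\nabla u|/(\lambda(r)k))^{p}$ on the set where the gradient is large, while the contribution of the set where $|\nabla u|\le \lambda(r)k/r$ is trivially bounded by $(\lambda(r)k/r)^{p}|A^{-}_{\lambda(r)k,r}|$. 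On the right, since $(u-\lambda(r)k)_{-}\le \lambda(r)k$ pointwise, the upper bound in ($\varPhi$) gives $\varPhi(x,(u-\lambda(r)k)_{-}/r)\le \varPhi^{+}_{B_r(\bar x)}(\lambda(r)k/r)$ on $A^{-}_{\lambda(r)k,r}$. Combining these and dividing through by $\varPhi^{-}_{B_r(\bar x)}(\lambda(r)k/r)\,(\lambda(r)k/r)^{-p}$ yields \eqref{eq2.3} after absorbing the elementary set-measure term into the constant $\gamma$ (which is legitimate since $q>p>1$ and $\sigma^{-q}\ge 1$).

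For the second assertion, I would observe that condition \eqref{eq2.4} is exactly the hypothesis $r^{1+n/s}\le \lambda(r)\,(k/\cdot)\le 1$ needed to invoke ($\varPhi_{\lambda}$) with $v = k$ rescaled appropriately — more precisely, writing $v$ so that $\lambda(r)v/r^{1+n/s} = \lambda(r)k/r$ forces $v = k r^{n/s}$, and then \eqref{eq2.4} says precisely $r^{1+n/s}\le \lambda(r)v\le 1$. Hence ($\varPhi_{\lambda}$) gives $\varPhi^{+}_{B_r(\bar x)}(\lambda(r)k/r)\le A\,\varPhi^{-}_{B_r(\bar x)}(\lambda(r)k/r)$, so the ratio appearing in \eqref{eq2.3} is bounded by the structural constant $A$, and \eqref{eq2.5} follows immediately.

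The only genuinely delicate point is the bookkeeping in matching the scaling in ($\varPhi_{\lambda}$) (which is written with the exponent $1+n/s$) to the quantity $\lambda(r)k/r$ that naturally appears from the De Giorgi inequality; one must substitute the correct $v$ and verify that the admissibility range $r^{1+n/s}\le\lambda(r)v\le 1$ translates to exactly \eqref{eq2.4}. Everything else is a routine application of the two-sided power bound ($\varPhi$) together with the truncation inequality $(u-\lambda(r)k)_{-}\le\lambda(r)k$; I do not anticipate any obstacle there.
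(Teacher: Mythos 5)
Your proposal is correct and follows essentially the same route as the paper: the paper packages your large/small gradient dichotomy into a Young-type inequality $\varPhi_{p}(x,a)b^{p}\leqslant\varPhi(x,a)+\varPhi(x,b)$ with $\varPhi_{p}(x,a)=a^{-p}\varPhi(x,a)$, whose proof is exactly your case split, and then applies \eqref{eq1.1} with the truncation bound $(u-\lambda(r)k)_{-}\leqslant\lambda(r)k$. Your identification $v=kr^{\frac{n}{s}}$ in ($\varPhi_{\lambda}$), turning \eqref{eq2.4} into the admissibility range $r^{1+\frac{n}{s}}\leqslant\lambda(r)v\leqslant 1$, is also precisely the paper's argument for \eqref{eq2.5}.
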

\begin{proof}
First, note the following Young's inequality
\begin{equation*}
\varPhi_{p}(x,a) b^{p} \leqslant \varPhi(x,a) + \varPhi(x,b),\quad a,b >0, \quad \varPhi_{p}(x,a):=a^{-p}\varPhi(x,a),
\end{equation*}
indeed,  if $b\leqslant a$, then $\varPhi_{p}(x,a) b^{p} \leqslant \varPhi(x,a)$ and if $b\geqslant a$, using the fact that by condition ($\varPhi$), function $\varPhi_{p}(x,a)$ is increasing, we obtain  $\varPhi_{p}(x,a) b^{p} \leqslant \varPhi(x,b)$.

Using this Young's inequality and inequalities \eqref{eq1.1} we get
\begin{multline*}
\int\limits_{B_{r(1-\sigma)}(\bar{x})} \varPhi^{-}_{B_{r}(\bar{x})}\bigg(\frac{\lambda(r) k}{r}\bigg)| \nabla (u-\lambda(r) k)_{-}|^{p} \, dx
\leqslant\\ \leqslant \bigg(\frac{\lambda(r) k}{r}\bigg)^{p}\int\limits_{B_{r(1-\sigma)}(\bar{x})}\varPhi_{p}\bigg(x,\frac{\lambda(r) k}{r}\bigg)|\nabla (u-\lambda(r)k)_{-}|^{p} dx \leqslant \\ \leqslant
\bigg(\frac{\lambda(r) k}{r}\bigg)^{p}\bigg\{\varPhi^{+}_{B_{r}(\bar{x})}\bigg(\frac{\lambda(r) k}{r}\bigg)|A^{-}_{\lambda(r)k,r}|+
\int\limits_{B_{r(1-\sigma)}(\bar{x})}\varPhi\big(x,|\nabla (u-\lambda(r)k)_{-}|\big) dx\bigg\} \leqslant \\ \leqslant
\gamma \sigma^{-q}\bigg(\frac{\lambda(r) k}{r}\bigg)^{p}\varPhi^{+}_{B_{r}(\bar{x})}\bigg(\frac{\lambda(r) k}{r}\bigg)|A^{-}_{\lambda(r)k,r}|,
\end{multline*}
which proves \eqref{eq2.3}. To prove \eqref{eq2.5} we note that by condition ($\varPhi_{\lambda}$)
\begin{equation}\label{eq2.6}
\frac{\varPhi^{+}_{B_{r}(\bar{x})}\big( \frac{\lambda(r) k}{r}\big)}{\varPhi^{-}_{B_{r}(\bar{x})}\big( \frac{\lambda(r) k}{r}\big)}
\leqslant \gamma,
\end{equation}
provided that $r^{1+\frac{n}{s}} \leqslant \lambda(r) r^{\frac{n}{s}}  k \leqslant 1$, which proves \eqref{eq2.5}. This completes the proof of the lemma.
\end{proof}

\subsection{Expansion of the Positivity }\label{subsec2.3}
The following lemma is a consequence of Lemma \ref{lem2.2} and Lemmas $6.2$, $6.3$ from \cite[Chap.\,2]{LadUra}.

\begin{lemma}\label{lem2.3}
Let $u\in DG^{-}(\Omega$), $u\geqslant 0$,  $B_{r}(\bar{x})\subset B_{\rho}(x_{0}) \subset B_{8\rho}(x_{0})  \subset \Omega$,
assume that the number $k>0$ satisfies the condition
\begin{equation}\label{eq2.7}
 \lambda(r)  k \leqslant \frac{1}{r^{\frac{n}{s}}},
\end{equation}
and assume also that with some $\alpha_{0} \in(0,1)$ there holds
\begin{equation}\label{eq2.8}
|\big\{B_{\frac{r}{2}}(\bar{x}): u \geqslant \lambda(r) k\big\}| \geqslant \alpha_{0} |B_{\frac{r}{2}}(\bar{x})| ,
\end{equation}
then there exists number $\eta_{0}\in(0,1)$, depending only on the data and $\alpha_{0}$, such that either
\begin{equation}\label{eq2.9}
\lambda(r) k \leqslant \frac{r}{\eta_{0} },
\end{equation}
or
\begin{equation}\label{eq2.10}
u(x) \geqslant \eta_{0}\,\lambda(r)\,k,\quad x \in B_{r}(\bar{x}).
\end{equation}

\end{lemma}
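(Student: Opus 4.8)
The plan is to run a standard De Giorgi iteration on the function $(u-\lambda(r)k)_-$, exploiting the energy estimate \eqref{eq2.5} together with the measure-theoretic hypothesis \eqref{eq2.8}. First I would assume that \eqref{eq2.9} fails, so that $\lambda(r)k > r/\eta_0$ for the $\eta_0$ to be chosen; combined with \eqref{eq2.7} this places us (for suitable levels) in the regime $r \leqslant \lambda(r)k \leqslant r^{-n/s}$ where the clean estimate \eqref{eq2.5} applies. The goal is to show that the set $\{B_{r/2}(\bar x): u < \eta_0\lambda(r)k\}$ has measure zero, which is exactly \eqref{eq2.10} (after possibly passing to $B_r(\bar x)$; note the last display in the statement writes $B_r(\bar x)$, so one shrinks radii appropriately in the iteration, starting from $B_{r/2}$ and the intermediate radii).

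Concretely, I would set levels $k_j = \eta_0\lambda(r)k + 2^{-j}(\lambda(r)k - \eta_0\lambda(r)k)$ decreasing to $\eta_0\lambda(r)k$, radii $r_j = \tfrac{r}{4} + 2^{-j-2}r$ (or similar, all between $r/4$ and $r/2$), and $y_j := |A^-_{k_j, r_j}\cap B_{r_j}(\bar x)|/|B_{r_j}(\bar x)|$. The hypothesis \eqref{eq2.8} gives $y_0 \leqslant 1-\alpha_0$. The Sobolev embedding applied to $(u-k_j)_-$ on $B_{r_j}$, controlling the $L^1$-gradient term via \eqref{eq2.5} with the appropriate $k = k_j/\lambda(r)$ and $\sigma \sim 2^{-j}$, then yields the recursive inequality
\begin{equation*}
y_{j+1} \leqslant \gamma\, b^{j}\, y_j^{1+\varkappa}
\end{equation*}
for some $b>1$, $\varkappa = \tfrac{1}{n} - \tfrac{1}{p}\cdot(\text{something})>0$ depending only on the data; the key point is that the factor $\big(\tfrac{\lambda(r)k}{r}\big)^p$ from \eqref{eq2.5} cancels against the factor coming from the level gaps $(k_j-k_{j+1})^{-p} \sim 2^{jp}(\lambda(r)k(1-\eta_0))^{-p}$, leaving only a power of $(1-\eta_0)^{-p}$ that is absorbed into the constant once $\eta_0$ is fixed. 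Since this cancellation is the mechanism that makes the iteration self-improving, I expect the main technical care to be in tracking it while keeping the constraint \eqref{eq2.4} valid for every level $k_j$ — i.e. checking $r \leqslant k_j$, which is where the standing assumption $\lambda(r)k > r/\eta_0$ (negation of \eqref{eq2.9}) is used, since $k_j \geqslant \eta_0\lambda(r)k > r$.

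By the classical fast-geometric-convergence lemma (Lemmas 6.2–6.3 of \cite[Chap.\,2]{LadUra}), there is $\nu_0 = \nu_0(\text{data})$ so that if $y_0 \leqslant \nu_0$ then $y_j \to 0$, hence $u \geqslant \eta_0\lambda(r)k$ a.e.\ on the limiting ball. To reach the threshold $y_0 \leqslant \nu_0$ starting only from $y_0 \leqslant 1-\alpha_0$, I would run a preliminary "shrinking the measure of the bad set" step: iterating the energy estimate over a finite number of nested levels between $\lambda(r)k$ and $\tfrac12\lambda(r)k$ (a De Giorgi-type lemma on the decay of $|A^-_{k,r}|$ as $k$ decreases) reduces $y$ below $\nu_0$ after finitely many steps, the number of steps depending on $\alpha_0$ and the data; this is exactly the standard passage and fixes $\eta_0$ in terms of $\alpha_0$. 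The only genuine obstacle, as already flagged, is bookkeeping: ensuring that at each of these finitely many intermediate levels the pair $(r, k_j)$ still satisfies \eqref{eq2.4} so that \eqref{eq2.5} is available with a data-dependent constant — once the negation of \eqref{eq2.9} is assumed this is automatic, and the rest is the routine De Giorgi machinery.
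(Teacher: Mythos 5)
Your overall strategy is the same as the paper's: the paper's proof is only a sketch which says exactly what you propose, namely assume \eqref{eq2.9} fails, observe that together with \eqref{eq2.7} this puts every relevant level in the range \eqref{eq2.4} so that the clean estimate \eqref{eq2.5} reduces matters to the standard De Giorgi classes $DG^{-}_{p}$, and then invoke the classical machinery (Lemmas 6.2--6.3 of Ladyzhenskaya--Ural'tseva, i.e. the shrinking-of-the-measure step plus the critical-mass/fast-geometric-convergence step). Your level bookkeeping is also correct: the lower bound $k_{j}\geqslant \eta_{0}\lambda(r)k>r$ from the negation of \eqref{eq2.9} and the upper bound from \eqref{eq2.7} are precisely what keeps \eqref{eq2.5} available at every intermediate level, and the cancellation of $\bigl(\lambda(r)k/r\bigr)^{p}$ against the level gaps is the right mechanism.

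There is, however, one genuine gap in your geometry. The lemma is an \emph{expansion of positivity} statement: the measure information \eqref{eq2.8} lives on $B_{r/2}(\bar{x})$, but the conclusion \eqref{eq2.10} must hold on the \emph{larger} ball $B_{r}(\bar{x})$ — this doubling is exactly what makes the iteration \eqref{eq3.10} in Lemma \ref{lem3.1} work. Your iteration, with radii $r_{j}$ decreasing from $r/2$ to $r/4$, can only produce $u\geqslant \eta_{0}\lambda(r)k$ on $B_{r/4}(\bar{x})$; "shrinking radii appropriately", as you put it, moves in the wrong direction and can never reach $B_{r}(\bar{x})$. The correct bookkeeping is to enlarge the working ball instead: the hypothesis on $B_{r/2}(\bar{x})$ trivially gives $|\{B_{2r}(\bar{x}): u\geqslant\lambda(r)k\}|\geqslant 4^{-n}\alpha_{0}|B_{2r}(\bar{x})|$, and one then runs the shrinking lemma and the critical-mass lemma on $B_{2r}(\bar{x})$, with radii decreasing from $2r$ to $r$, which yields the stated conclusion on $B_{r}(\bar{x})$ (the constant $\eta_{0}$ now absorbs the extra dimensional factor $4^{-n}$ in the measure fraction, and the harmless constant discrepancies in \eqref{eq2.4} at radius $\sim 2r$ are absorbed using condition ($\varPhi$) and the monotonicity of $\lambda$). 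With this correction your argument coincides with what the paper intends by its reference to the Ladyzhenskaya--Ural'tseva lemmas; without it you prove a strictly weaker statement that could not be iterated in the proof of Lemma \ref{lem3.1}.
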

The proof of the lemma is almost standard. If \eqref{eq2.9} is violated, then by \eqref{eq2.7} inequalities  \eqref{eq2.4} hold. So, \eqref{eq2.5} define the standard De Giorgi classes $DG^{-}_{p}(\Omega)$ with the appropriate choice of number $k$. We refer the reader for the details to Lemmas $6.2$ and $6.3$ of \cite[Chap.\,2]{LadUra}.

\section{Weak  Harnack Inequality, Proof of Theorem \ref{th1.1}}\label{Sec3}

First, we prove the following lemma.
\begin{lemma}\label{lem3.1}
Let $B_{\rho}(x_{0}) \subset B_{8\rho}(x_{0}) \subset \Omega$, $\left(\int\limits_{B_{2\rho}(x_{0})} u^{s} \right)^{\frac{1}{s}} \leqslant d$ and let the following inequality holds
\begin{equation}\label{eq3.1}
\left|\left\{\, B_{\frac{\rho}{2}}(x_{0}): u \geqslant N \,\right\}\right| \geqslant \alpha \left|B_{\frac{\rho}{2}}(x_{0})\right|,
\end{equation}
for some $N >0$ and some  $\alpha \in(0,1)$. Then, under the conditions of Lemma~\ref{lem2.2}, there exist $C_{2}$, $\tau >0$ depending only on the data and $d$ such that either
\begin{equation}\label{eq3.2}
\alpha^{\tau}\leqslant  \frac{C_{2} \rho}{N \lambda(\rho)},
\end{equation}
or
\begin{equation}\label{eq3.3}
\alpha^{\tau}\leqslant \frac{C_{2}}{N \lambda(\rho)} \inf\limits_{B_{\frac{\rho}{2}}(x_{0})} u.
\end{equation}
\end{lemma}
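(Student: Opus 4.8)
The plan is to propagate the measure–theoretic information \eqref{eq3.1} from $B_{\rho/2}(x_{0})$ to a ball on which the local clustering lemma (Lemma~\ref{lem2.1}) applies, then invoke the expansion of positivity (Lemma~\ref{lem2.3}) on that small ball, and finally chain the resulting pointwise lower bound back to $B_{\rho/2}(x_{0})$ via a finite covering, keeping careful track of the powers of $\alpha$ and the factor $\lambda(\rho)$. First I would fix a level of the form $k:=N/\lambda(\rho)$ and observe that, if neither \eqref{eq3.2} nor the obvious smallness alternative holds, the constraint \eqref{eq2.7} of Lemma~\ref{lem2.3} (equivalently $r^{1+n/s}\le\lambda(r)\,r^{n/s}k\le1$) is satisfied on $B_{\rho/2}(x_{0})$ for $r$ comparable to $\rho$; here the bound $\left(\int_{B_{2\rho}}u^{s}\right)^{1/s}\le d$ together with $s\ge q-p$ is exactly what controls $N$ from above so that the argument of $\varPhi$ stays in the admissible range of condition ($\varPhi_{\lambda}$). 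Next I would feed \eqref{eq3.1} into Lemma~\ref{lem2.1}: the $W^{1,1}$ bound on $(u-N)_{-}$ needed in \eqref{eq2.1} comes from the energy estimate \eqref{eq2.5} of Lemma~\ref{lem2.2} combined with Hölder's inequality (passing from the $L^{p}$ norm of $\nabla(u-\lambda(r)k)_{-}$ to the $L^{1}$ norm costs a factor $|A^{-}|^{1-1/p}$, which is harmless), so that $\mathcal{K}$ in \eqref{eq2.1} is a structural constant. Lemma~\ref{lem2.1} then produces a sub-ball $B_{\bar r}(\bar x)\subset B_{\rho/2}(x_{0})$ with $\bar r\sim\delta\,\alpha^{2}\,\nu(1-\xi)\,\rho$ on which $u\ge\xi N$ on a fraction $(1-\nu)$ of the ball; choosing $\xi,\nu$ as fixed structural constants, the hypothesis \eqref{eq2.8} of Lemma~\ref{lem2.3} holds on $B_{\bar r}(\bar x)$ with $\alpha_{0}$ a structural constant (after rewriting $\xi N=\lambda(\bar r)\cdot(\xi N/\lambda(\bar r))$ and using condition ($\lambda$) to compare $\lambda(\bar r)$ with $\lambda(\rho)$ up to a power of $\bar r/\rho$, hence up to a power of $\alpha$).

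Applying Lemma~\ref{lem2.3} on $B_{\bar r}(\bar x)$ then yields, in the non-degenerate alternative, a pointwise lower bound $u\ge\eta_{0}\lambda(\bar r)(\xi N/\lambda(\bar r))=\eta_{0}\xi N$ on $B_{2\bar r}(\bar x)$ — more precisely a bound of the form $u\ge c\,\alpha^{b'}\,N$ once the $\lambda$–comparison is absorbed. Iterating the expansion of positivity a fixed number of times doubles the radius at each step, so after $\sim\log_{2}(\rho/\bar r)\sim\log(1/\alpha)$ steps one reaches a ball of radius comparable to $\rho$ centered at $\bar x$, and since $\bar x\in B_{\rho/2}(x_{0})$, a standard finite-chain-of-balls argument covers $B_{\rho/2}(x_{0})$ itself. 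Each doubling multiplies the lower bound by $\eta_{0}$ (and costs another $\lambda$–comparison factor), so after $\sim\log(1/\alpha)$ iterations the cumulative constant is $\eta_{0}^{\,\sim\log(1/\alpha)}=\alpha^{\,\sim\log(1/\eta_{0})}$, i.e. a fixed power of $\alpha$; collecting every power of $\alpha$ acquired along the way gives a single exponent $\tau=\tau(\text{data})$ with $\inf_{B_{\rho/2}(x_{0})}u\ge C_{2}^{-1}\alpha^{\tau}N/\lambda(\rho)^{-1}$, which after rearranging is precisely \eqref{eq3.3}; the degenerate alternative \eqref{eq2.9} at some step of the iteration produces \eqref{eq3.2}.

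The main obstacle I expect is the bookkeeping of the factor $\lambda(r)$ throughout the iteration: unlike the classical ($p=p$, $\lambda\equiv1$) situation, the level around which one expands the positivity is $\lambda(r)k$ and it \emph{changes with the radius} at every doubling step, so one must repeatedly use condition ($\lambda$) to trade $\lambda(\bar r)/\lambda(\rho)$ for a controlled power $(\bar r/\rho)^{b}$, and — crucially — one must check at \emph{every} step that the admissibility constraint $r^{1+n/s}\le\lambda(r)r^{n/s}k\le1$ in ($\varPhi_{\lambda}$) is not violated, which is where the $L^{s}$-bound $d$ enters and why the final constants depend on $d$. Keeping the number of iterations at $O(\log(1/\alpha))$ while ensuring the accumulated $\lambda$– and $\alpha$–powers combine into a single clean exponent $\tau$ is the technical heart of the argument; the two alternatives \eqref{eq3.2}–\eqref{eq3.3} are exactly the two outcomes (degenerate vs.\ non-degenerate) of Lemma~\ref{lem2.3} carried through this iteration.
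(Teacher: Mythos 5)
Your overall architecture is the same as the paper's: energy estimate (Lemma \ref{lem2.2}) feeding the local clustering Lemma \ref{lem2.1} to produce a ball $B_{\bar r}(\bar x)$ with $\bar r\sim\alpha^{2}\rho$, then iterated expansion of positivity (Lemma \ref{lem2.3}) over $\sim\log(1/\alpha)$ doublings, with condition ($\lambda$) used to trade $\lambda(\bar r)$ for $\lambda(\rho)$ times a power of $\alpha$, and the two alternatives \eqref{eq3.2}--\eqref{eq3.3} coming from the degenerate/non-degenerate outcomes. (One cosmetic difference: no chaining argument is needed at the end, since the final ball $B_{2^{j}\bar r}(\bar x)$ with $2^{j}\bar r=\rho$ already contains $B_{\rho/2}(x_{0})$ because $\bar x\in B_{\rho/2}(x_{0})$.)

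However, there is a genuine gap at the very first step: your choice of working level, $k:=N/\lambda(\rho)$, i.e.\ level $\lambda(\rho)k=N$, does not in general satisfy the upper admissibility constraint $\lambda(\rho)k\leqslant\rho^{-n/s}$ required in \eqref{eq2.4}, \eqref{eq2.7}. The hypotheses \eqref{eq3.1} and $\bigl(\int_{B_{2\rho}}u^{s}\bigr)^{1/s}\leqslant d$ only give $N\leqslant\gamma\,d\,\alpha^{-1/s}\rho^{-n/s}$, so $N\rho^{n/s}$ may exceed $1$ by a factor of order $d\,\alpha^{-1/s}$; then condition ($\varPhi_{\lambda}$) gives no control of $\varPhi^{+}/\varPhi^{-}$ at that level, \eqref{eq2.5} is unavailable, and the constant $\mathcal{K}$ in \eqref{eq2.1} is no longer structural, so both the clustering step and Lemma \ref{lem2.3} break down. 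The paper's key device, which your plan is missing, is to lower the level to $\lambda(\rho)\varepsilon N$ with $\varepsilon=(1+d)^{-1}4^{-n/s}\alpha^{1/s}$: the measure information \eqref{eq3.1} trivially persists at the lower level, while \eqref{eq3.6} guarantees $\lambda(\rho)\varepsilon N\leqslant(1+d)^{-1}\bigl(\fint_{B_{2\rho}}u^{s}\bigr)^{1/s}\leqslant\rho^{-n/s}$, and the negation \eqref{eq3.4} of \eqref{eq3.2} supplies the lower admissibility bound; this is precisely where $d$ enters $C_{2}$ and where the $1/s$ contribution to $\tau=2\log(1/\eta_{0})+2b+1/s$ comes from. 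With this $\alpha^{1/s}(1+d)^{-1}$ reduction inserted, the rest of your plan goes through essentially as in the paper.
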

\begin{proof}
Further we will assume that inequality \eqref{eq3.2} is violated, i.e. with some $\tau >0$
\begin{equation}\label{eq3.4}
\alpha^{\tau} N \lambda(\rho) \geqslant  C_{2} \rho.
\end{equation}
Let $\varepsilon\in(0,1)$ be some number to be chosen later. Applying inequality \eqref{eq2.3}  for \\ $(u- \lambda(\rho)\varepsilon N)_{-}$ over the pair of balls $B_{\frac{\rho}{2}}(x_{0})$ and $B_{\rho}(x_{0})$, we obtain
\begin{equation}\label{eq3.5}
\fint\limits_{B_{\frac{\rho}{2}}(x_{0})} |\nabla (u- \lambda(\rho)\varepsilon N)_{-}|^{p}\,dx \leqslant \gamma \,\frac{\varPhi^{+}_{B_{\rho}(x_{0})}\big( \frac{\lambda(\rho)\varepsilon N}{\rho}\big)}{\varPhi^{-}_{B_{\rho}(x_{0})}\big( \frac{\lambda(\bar{\rho})\varepsilon N }{\rho}\big)}\bigg(\lambda(\rho)\frac{\varepsilon N}{\rho}\bigg)^{p}  .
\end{equation}
Now we need to estimate the right-hand side of the last inequality, for this we note that inequality \eqref{eq3.1} yields
\begin{equation}\label{eq3.6}
\left(\fint\limits_{B_{2\rho}(x_{0})} u^{s}\,dx\right)^{\frac{1}{s}}\geqslant  4^{-\frac{n}{s}} \alpha^{\frac{1}{s}} N,
\end{equation}
and moreover
\begin{equation*}
|\{\, B_{\frac\rho2}(x_{0}) : u \geqslant \lambda(\rho)(1+d)^{-1} 4^{-\frac{n}{s}}\alpha^{\frac{1}{s}} N \,\}|\geqslant |\{\, B_{\frac\rho2}(x_{0}) : u \geqslant  N \,\}|\geqslant \alpha |B_{\frac\rho2}(x_{0})|.
\end{equation*}
Choosing $\varepsilon =  (1+d)^{-1} 4^{-\frac{n}{s}} \alpha^{\frac{1}{s}}$, by \eqref{eq3.4}, \eqref{eq3.6}, we obtain
\begin{equation}\label{eq3.7}
\rho \leqslant \lambda(\rho) \varepsilon N \leqslant (1+d)^{-1} \left(\fint\limits_{B_{2\rho}(x_{0})} u^{s}\,dx\right)^{\frac{1}{s}}
\leqslant \frac{d}{(d+1)\rho^{\frac{n}{s}}}\leqslant  \frac{1}{\rho^{\frac{n}{s}}},
\end{equation}
provided that $\tau \geqslant \dfrac{1}{s}$ and $C_{2} \geqslant (1+d) 4^{\frac{n}{s}}$.  Therefore Lemma \ref{lem2.2} and inequalities \eqref{eq3.5}, \eqref{eq3.7} yield
\begin{equation}\label{eq3.8}
\fint\limits_{B_{\frac{\rho}{2}}(x_{0})} |\nabla (u- \lambda(\rho)\varepsilon N)_{-}|^{p}\,dx \leqslant \gamma \,\bigg(\lambda(\rho)\frac{\varepsilon N}{\rho}\bigg)^{p}.
\end{equation}

The local clustering Lemma \ref{lem2.1} with $k=\lambda(\rho)\varepsilon  N$, $\nu=\frac{1}{4}$, $\xi=\frac{1}{4}$, $\mathcal{K}=\gamma $, $r=\frac{\rho}{2}$  implies the existence of a point $\bar{x}\in B_{\frac{\rho}{2}}(x_{0})$ and $\delta \in(0,1)$ depending only on the data, such that
\begin{equation*}
\left|\left\{B_{\bar{r}}(\bar{x}): u \geqslant \frac{\lambda(\bar{r})}{4} \varepsilon N \right\}\right|\geqslant \left|\left\{B_{\bar{r}}(\bar{x}): u \geqslant \frac{\lambda(\rho)}{4} \varepsilon N \right\}\right| \geqslant \frac{3}{4} |B_{\bar{r}}(\bar{x})|,\quad \bar{r}= \delta_{0} \alpha^{2} \rho, \quad \delta_{0}:= \frac{3\delta}{16\mathcal{K}}.
\end{equation*}
Since $\lambda(r)$ is non-decreasing, inequality \eqref{eq3.7} implies
\begin{equation}\label{eq3.9}
\frac{\lambda(\bar{r})}{4}\varepsilon N \leqslant \frac{\lambda(\rho)}{4}\varepsilon N \leqslant \frac{1}{\rho^{\frac{n}{s}}} \leqslant
\frac{1}{\bar{r}^{\frac{n}{s}}},
\end{equation}
and moreover, by condition ($\lambda$) and by \eqref{eq3.4}
\begin{equation*}
\frac{\lambda(\bar{r})}{4}\varepsilon N \geqslant \frac{\lambda(\rho)}{4}\bigg(\frac{\bar{r}}{\rho}\bigg)^{b}\varepsilon N
=\lambda(\rho)\frac{\delta_{0}^{b}}{4^{1+\frac{n}{s}}}(1+d)^{-1} \alpha^{\frac{1}{s}+ 2b} N \geqslant \rho\geqslant \bar{r},
\end{equation*}
provided that $\tau \geqslant 2b +\dfrac{1}{s}$ and $C_{2}\geqslant (1+d) \dfrac{4^{1+\frac{n}{s}}}{\delta_{0}^{b}}$.

So, Lemma \ref{lem2.3} is applicable with $\alpha_{0}=\dfrac{3}{4}$ and $k=\dfrac{\varepsilon}{4} N$, so we obtain with some $\eta_{0} \in(0,1)$ depending only on the data
$$
u(x)\geqslant \frac{\eta_{0}\lambda(\bar{r})\varepsilon N}{4}, \quad x \in B_{2\bar{r}}(\bar{x}),
$$
provided that $C_{2}\geqslant (1+d)\dfrac{4^{1+\frac{n}{s}}}{\eta_{0}\delta_{0}^{b}}$.

Repeating this procedure $j$ times we obtain
\begin{equation}\label{eq3.10}
 u(x)\geqslant \frac{\eta_{0}^{j} \,\lambda(\bar{r})\varepsilon N}{4}, \quad x \in B_{2^{j}\bar{r}}(\bar{x}),
\end{equation}
provided that
\begin{equation}\label{eq3.11}
2^{j} \bar{r}\leqslant \frac{\eta_{0}^{j} \,\lambda(\bar{r})\varepsilon N}{4}\leqslant \dfrac{1}{(2^{j}\bar{r})^{\frac{n}{s}}}.
\end{equation}
 Choose $j$ by the condition $2^{j}\bar{r}=\rho$, that is $2^{j} \delta_{0} \alpha^{2}=1$, the second inequality in \eqref{eq3.11} holds by \eqref{eq3.9}. By \eqref{eq3.4} and  condition ($\lambda$), we have
\begin{equation*}
\eta_{0}^{j} \lambda(\bar{r}) \varepsilon N \geqslant \frac{\delta_{0}^{\log\frac{1}{\eta_{0}}+b}}{(1+d) 4^{\frac{n}{s}}}\,\alpha^{2\log\frac{1}{\eta_{0}}+ 2b+\frac{1}{s}}\,N\,\lambda(\rho) \geqslant \rho= 2^{j} \bar{r},
\end{equation*}
provided that  $\tau=2 \log\dfrac{1}{\eta_{0}}  + 2b +\dfrac{1}{s}$  and $C_{2}=C_{2}(d,\eta_{0},\delta_{0})>0$ is large enough.  Therefore,
inequality \eqref{eq3.10} yields
\begin{equation*}
u(x)\geqslant \frac{\lambda(\rho)}{C_{2}} \alpha^{\tau} N, \quad  x \in B_{\frac{\rho}{2}}(x_{0}),
\end{equation*}
which completes the proof of the lemma.
\end{proof}
To complete the proof of the weak Harnack inequality,  we set  $\bar{m}(\rho)=\dfrac{1}{\lambda(\rho)}\left( \inf\limits_{B_{\frac{\rho}{2}}(x_{0})}u(x)+\rho \right),$ then Lemma \ref{lem3.1}  with  $\theta\in\big(0, \frac{1}{2\tau}\big]$  yields
\begin{multline*}
\fint\limits_{B_{\rho}(x_{0})}u^{\theta}\,dx=\frac{\theta}{|B_{\rho}(x_{0})|}\,\int\limits_{0}^{\infty}|\{B_{\rho}(x_{0}): u(x) >N\}|\,N^{\theta-1}\,dN \leqslant \\ \leqslant  [\bar{m}(\rho)]^{\theta}+ \gamma [\bar{m}(\rho)]^{\frac{1}{\tau}}\int\limits_{\bar{m}(\rho)}^{\infty} N^{\theta-\frac{1}{\tau}-1} \,dN \leqslant \gamma [\bar{m}(\rho)]^{\theta},
\end{multline*}
which proves Theorem \ref{th1.1}.

\vskip3.5mm
{\bf Acknowledgements.} The authors are partially supported due to the project "Mathematical \,\,modeling\,\, of\, complex\,\, dynamical\,\, systems\,\, and\,\, processes\,\, caused\,\, by\,\, the\,\, state\,\, security" (Reg. No. 0123U100853), by the grant of Ministry of Education and Science of Ukraine (Reg. No. 0121U109525) and by the Grant EFDS-FL2-08 of the found The European Federation of Academies of Sciences and Humanities (ALLEA).

%\bigskip
\newpage
CONTACT INFORMATION

\medskip
\textbf{Mariia O.~Savchenko}\\Institute of Applied Mathematics and Mechanics,
National Academy of Sciences of Ukraine, \\ \indent Heneral Batiuk Str. 19, 84116 Sloviansk, Ukraine\\
shan\underline{ }maria@ukr.net

\medskip
\textbf{Igor I.~Skrypnik}\\Institute of Applied Mathematics and Mechanics,
National Academy of Sciences of Ukraine, \\ \indent Heneral Batiuk Str. 19, 84116 Sloviansk, Ukraine\\
Vasyl' Stus Donetsk National University,
\\ \indent 600-richcha Str. 21, 21021 Vinnytsia, Ukraine\\ihor.skrypnik@gmail.com

\medskip
\textbf{Yevgeniia A. Yevgenieva}
\\ Max Planck Institute for Dynamics of Complex Technical Systems, \\ \indent Sandtorstrasse 1, 39106 Magdeburg, Germany
\\Institute of Applied Mathematics and Mechanics,
National Academy of Sciences of Ukraine, \\ \indent Heneral Batiuk Str. 19, 84116 Sloviansk, Ukraine\\yevgeniia.yevgenieva@gmail.com


\begin{thebibliography}{99}

\bibitem{Alk}
Yu.\,A.~Alkhutov,
The Harnack inequality and the H\"{o}lder property of solutions of nonlinear elliptic equations
with a nonstandard growth condition (Russian),
Differ. Uravn. \textbf{33} (1997), no.~12, 1651--1660;
translation in Differential Equations \textbf{33} (1997), no.~12, 1653--1663 (1998).

\bibitem{AlkSur}
Yu. A. Alkhutov, M. D. Surnachev,
H\"{o}lder Continuity and Harnack's Inequality
for $p(x)$-Harmonic Functions, Proceedings of the Steklov Inst. of Math.,  \textbf{308}  (2020),   1--21.




\bibitem{BarColMin1}
P.~Baroni, M.~Colombo, G.~Mingione,
Harnack inequalities for double phase functionals,
Nonlinear Anal. \textbf{121} (2015), 206--222.



\bibitem{BarColMin2}
P.~Baroni, M.~Colombo, G.~Mingione,
Non-autonomous functionals, borderline cases and related
function classes,
St. Petersburg Math. J. \textbf{27} (2016), 347--379.



\bibitem{BarColMin3}
P. Baroni, M. Colombo, G. Mingione,
Regularity for general functionals with double phase, Calc. Var.
Partial Differential Equations \textbf{57}, 62 (2018). %, no. 2, Paper No. , 48 pp.



\bibitem{BenHarHasKar}
A. Benyaiche, P. Harjulehto, P. H\"{a}st\"{o}, A. Karppinen,
The weak Harnack inequality for unbounded supersolutions
of equations with generalized Orlicz growth, J. of Diff. Equations \textbf{275}  (2021), 790--814.



\bibitem{BurSkr}
K.\,O. Buryachenko, I.\,I. Skrypnik,
Local continuity and Harnack's inequality for double-phase parabolic equations,
 Potential Analysis \textbf{56}  (2020), 137--164.



\bibitem{ColMin1}
M.~Colombo, G.~Mingione,
Bounded minimisers of double phase variational integrals,
Arch. Rational Mech. Anal.  \textbf{218} (2015), no. 1, 219--273.



\bibitem{ColMin2}
M.~Colombo, G.~Mingione, Regularity for double phase variational problems,
Arch. Rational Mech. Anal.  \textbf{215} (2015), no. 2, 443--496.



\bibitem{ColMin3}
M. Colombo, G. Mingione,
Calderon-Zygmund estimates and non-uniformly elliptic operators,
J. Funct. Anal. \textbf{270} (2016), 1416--1478.

\bibitem{CupMarMas}
G. Cupini, P. Marcellini, E. Mascolo,
Local boundedness of minimizers with limit growth conditions,
J. Optim. Theory Appl. \textbf{166} (2015), 1--22.


\bibitem{DiBGiaVes}
E. DiBenedetto, U. Gianazza, V. Vespri, Local clustering of the non-zero set of functions in $W^{1,1}(E)$,
Atti Accad. Naz. Lincei Rend. Lincei Mat. Appl. \textbf{17}(3) (2006) , 223--225.


\bibitem{DiBTru}
E. DiBenedetto, N.S. Trudinger, Harnack inequalities for quasi-minima of variational integrals, Ann.
Inst. Henri Poincare, Analyse Non Lineaire  \textbf{1}(4)  (1984) ,  295--308


\bibitem{HadSkrVoi}
O. V. Hadzhy, I. I. Skrypnik, M. V. Voitovych, Interior continuity, continuity up to the
boundary and Harnack's inequality for double-phase elliptic equations with non-logarithmic
growth, Math. Nachrichten, in press.

\bibitem{HarKinLuk}
P. Harjulehto, J. Kinnunen, T. Lukkari,
Unbounded supersolutions of nonlinear equations with nonstandard growth,
Bound. Value Probl. 2007 (2007) 48348

\bibitem{HarHasLee}
P. Harjulehto, P. H\"{a}st\"{o}, M. Lee,
H\"{o}lder continuity of quasiminimizers and $\omega$-minimizers of functionals with generalized
Orlicz growth,  Ann. Sc. Norm. Super Pisa Cl. Sci \textbf{5}  XXII
(2021), no.2, 549--582.

\bibitem{HarHasToi}
P. Harjulehto, P. H\"{a}st\"{o}, O. Toivanen,
H\"{o}lder regularity of quasiminimizers under generalized growth conditions,
Calc. Var. Partial Differential Equations \textbf{56} (2017), no. 2, Art. 22, 26 pp.



\bibitem{KrySaf}
N.\,V. Krylov, M.\,V. Safonov,
A property of the solutions of parabolic equations with measurable coefficients,
Izv. Akad. Nauk SSSR Ser. Mat. \textbf{44} (1980), no. 1, 161--175 (in Russian).



\bibitem{LadUra}
O.\,A.~Ladyzhenskaya, N.\,N.~Ural'tseva,
Linear and Quasilinear Elliptic Equations,
Nauka, Moscow, 1973.


\bibitem{Lie}
G.\,M.~Lieberman,
The natural generalization of the natural conditions of
Ladyzhenskaya and Ural'tseva for elliptic equations,
Comm. Partial Differential Equations \textbf{16} (1991), no. 2-3, 311--361.

\bibitem{LisSkr}
V. Liskevich, I. I. Skrypnik, Harnack inequality and continuity of solutions to elliptic equations with nonstandard
growth conditions and lower orfer terms, Ann. Mat. Pura ed Appl. \textbf{189} (2010), 335--356.

\bibitem{MizOhnShi}
Y. Mizuta, T. Ohno, T. Shimomura,
Sobolev's theorem for double phase functionals,
Math. Inequal. Appl.  \textbf{23} (2020), no. 1,  17--33.

\bibitem{Ok}
J. Ok,
Regularity for double phase problems under additional integrability assumptions,
Nonlinear Anal. \textbf{194} (2020) 111408.

\bibitem{RagTac}
M. A. Ragusa, A. Tachikawa,
Regularity for minimizers for functionals of double phase with variable exponents,
Adv. Nonl. Anal. \textbf{9}(1) (2020), 710--728.

\bibitem{SavSkrYev}
M. O. Savchenko, I. I. Skrypnik, Y. A. Yevgenieva,
Continuity and Harnack inequalities for local minimizers of non uniformly elliptic functionals with generalized Orlicz growth
under the non-logarithmic conditions, Nonl. Analysis \textbf{230} (2023), 113221.

\bibitem{ShaSkrVoi}
M.\,A. Shan,  I.\,I. Skrypnik,  M.\,V. Voitovych,
Harnack's inequality for quasilinear elliptic equations with
generalized Orlicz growth, Electr. J. of Diff. Equations, \textbf{2021} (2021), no. 27,  1--16.




\bibitem{SkrVoi1}
I.\,I. Skrypnik, M.\,V. Voitovych,
$\mathfrak{B}_{1}$ classes of De Giorgi, Ladyzhenskaya and Ural'tseva
and their application to elliptic and parabolic equations with nonstandard growth,
Ukr. Mat. Visn. \textbf{16} (2019), no. 3, 403--447.



\bibitem{SkrVoi2}
I.\,I. Skrypnik, M.\,V. Voitovych,
$\mathcal{B}_{1}$ classes of De Giorgi-Ladyzhenskaya-Ural'tseva and
their applications to elliptic and parabolic equations with generalized Orlicz
growth conditions, Nonlinear Anal. \textbf{202} (2021) 112--135.



\bibitem{Sur}
M. D. Surnachev, On the weak Harnack inequality for the parabolic $p(x)$- Laplacian,
Asymptotic Analysis, DOI:10.3233/ASY-211746 (2021).



\end{thebibliography}
\end{document}